\def\cqedsymbol{\ifmmode$\lrcorner$\else{\unskip\nobreak\hfil
\penalty50\hskip1em\null\nobreak\hfil$\lrcorner$
\parfillskip=0pt\finalhyphendemerits=0\endgraf}\fi}
\title{Every graph is eventually Tur\'an-good}
\author{Natasha Morrison%
\thanks{Department of Mathematics and Statistics, University of Victoria, David Turpin Building,
3800 Finnerty Road, Victoria, BC, Canada V8P 5C2. E-mail: \texttt{nmorrison@uvic.ca}.
\newline Research supported by NSERC Discovery Grant RGPIN-2021-02511 and NSERC Early Career Supplement DGECR-2021-00047.} \and  JD Nir%
\thanks{Department of Mathematics, Toronto Metropolitan University, Toronto, ON, Canada.
E-mail: \texttt{jd.nir@ryerson.ca}.} 
\and Sergey Norin% 
\thanks{Department of Mathematics and Statistics, McGill University, Montr\'{e}al, QC, Canada. E-mail: \texttt{snorine@gmail.com}.
Research supported by NSERC Discovery Grant.}
\and Paweł Rzążewski%
\thanks{Warsaw University of Technology \& University of Warsaw, Warsaw, Poland E-mail: \texttt{pawel.rzazewski@pw.edu.pl}.
\newline This research is a part of projects that have received funding from the European Research Council (ERC) under the European Union's Horizon 2020 research and innovation programme
Grant Agreement 948057.}
\and Alexandra Wesolek% 
\thanks{Department of Mathematics, Simon Fraser University, Burnaby, BC, Canada. E-mail:
\texttt{alexandrawesolek@gmail.com}.
\newline Research supported by the Vanier Canada Graduate Scholarships program.
}
}
\date{\today}
\newcommand\ii{\mathsf{inj}}
\newcommand\mc[1]{\mathcal{#1}}
\newcommand{\II}{\mathsf{INJ}}
\newcommand{\e}{\textup{\textsf{e}}}
\renewcommand{\v}{\textup{\textsf{v}}}
\newcommand{\ex}{\textsf{ex}}
\newcommand{\s}[1]{\left(#1\right)}
\newcommand\Ed{\e(T_r(n))}
\declaretheorem[name=Theorem,     refname={Theorem,Theorems},         numberwithin=section]{theorem}
\declaretheorem[name=Lemma,       refname={Lemma,Lemmas},             sibling=theorem]{lemma}
\declaretheorem[name=Corollary,   refname={Corollary,Corollaries},    sibling=theorem]{corollary}
\declaretheorem[name=Claim,     refname={Claim,Claims}, numberwithin=theorem]{claim}
\declaretheorem[name=Question,     refname={Question,Questions},         sibling=theorem]{question}
\newenvironment{claimproof}{\noindent {\emph{Proof of Claim.}}}{\hfill\cqedsymbol\medskip}
\begin{document}
	\maketitle
\begin{abstract}
	Let  $H$  be a graph. We show that if $r$ is large enough as a function of $H$, then the $r$-partite Tur\'an graph maximizes the number of copies of $H$ among all $K_{r+1}$-free graphs on a given number of vertices. This confirms a conjecture of Gerbner and Palmer. 
\end{abstract}

\section{Introduction}

For a pair of graphs $G$ and $F$, say that $G$ is \emph{$F$-free} if $G$ does not contain a subgraph isomorphic to $F$. 
Let $\mc{N}(H,G)$ denote the number of copies of a graph $H$ in $G$, that is, the number of subgraphs of $G$ isomorphic to $H$, and let 
\[\ex(n,H,F) = \max\left\{\mc{N}(H,G) \: | \: G \; \text{is an} \; n\text{-vertex} \; F\text{-free graph}\right\}\]
be the maximum number of subgraphs isomorphic to the target graph $H$ in an $n$-vertex $F$-free graph.

For the case $H = K_2$, this function has been widely studied. Indeed, the classical theorem of Tur\'{a}n~\cite{T41} states that the unique $n$-vertex $K_{r+1}$-free graph with $\ex(n,K_2,K_{r+1})$ edges is the \emph{Tur\'an graph $T_r(n)$}: the complete $r$-partite graph with parts of size either $\lfloor\frac{n}{r}\rfloor$ or $\lceil\frac{n}{r}\rceil$. The special case $r=2$ was originally resolved by Mantel~\cite{M1907} in 1907. The large scale behaviour of $\ex(n,K_2,F)$ was resolved by the powerful Erd\H{o}s-Stone-Simonovits Theorem~\cite{ES66, ES46}. This theorem gives asymptotically tight bounds for $\ex(n,K_2,F)$ in terms of the chromatic number of $F$, for all non-bipartite graphs $F$, and shows the Tur\'{a}n graph is essentially best possible.

\begin{theorem}[Erd\H{o}s-Stone-Simonovits~\cite{ES46, ES66}]
Let $F$ be a graph with chromatic number $k \ge 2$. Then
\[\ex(n,K_2,F) = (1+o(1))\e(T_{k-1}(n)).\]
\end{theorem}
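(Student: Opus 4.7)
The plan is to establish matching lower and upper bounds. The lower bound is immediate: since $H$ has chromatic number $k$, it does not embed into any $(k-1)$-colourable graph, so $T_{k-1}(n)$ is $H$-free and thus $\ex(n,K_2,H) \ge \e(T_{k-1}(n))$.

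For the upper bound, the key observation is that, setting $t := |V(H)|$, a proper $k$-colouring of $H$ witnesses an embedding of $H$ into the complete balanced $k$-partite graph $K_k(t,\ldots,t)$. Consequently, every $H$-free graph is also $K_k(t,\ldots,t)$-free, and it suffices to prove
\[
\ex(n,K_2,K_k(t,\ldots,t)) \le (1+o(1))\e(T_{k-1}(n)).
\]

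To prove this bound I would apply Szemer\'edi's regularity lemma. Fix $\varepsilon>0$ and a constant $d \ll \varepsilon$, apply the lemma with suitably small parameters to an $n$-vertex graph $G$ with $\e(G) \ge (1+\varepsilon)\e(T_{k-1}(n))$, and form the reduced graph $R$ whose vertices are the parts of the resulting partition and whose edges correspond to $\varepsilon$-regular pairs of density at least $d$. A routine counting argument---discarding edges within parts, across irregular pairs, and across sparse pairs---shows that, for an appropriate choice of parameters, $R$ has strictly more than $\e(T_{k-1}(|V(R)|))$ edges. By Tur\'an's theorem, $R$ then contains a copy of $K_k$. An embedding (counting) lemma lifts this $K_k$ in $R$ to the desired $K_k(t,\ldots,t)$ in $G$, yielding a contradiction.

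The principal obstacle is the embedding lemma itself: to turn a $K_k$ in the reduced graph into a balanced $k$-partite blow-up in $G$ one builds the embedding vertex by vertex and must maintain, at every step, that a positive-density set of candidates survives in each of the remaining parts. This requires iteratively invoking the defect form of regularity to show that all but an $\varepsilon$-fraction of vertices in one class have typical neighbourhoods in the others, with parameters carefully tuned so that the degradation stays below $d$ throughout the $kt$ embedding steps. A regularity-free alternative is an inductive application of the K\H{o}v\'ari--S\'os--Tur\'an theorem to the neighbourhoods of high-degree vertices, but this still requires careful bookkeeping of how the required part size $t$ grows with $k$.
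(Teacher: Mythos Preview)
The paper does not prove this theorem; it merely states it as the classical Erd\H{o}s--Stone--Simonovits theorem with citations~\cite{ES46,ES66}, so there is no in-paper argument to compare against.

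Your outline is a correct and standard modern route: the lower bound via $T_{k-1}(n)$ is immediate, and the upper bound via $H \subseteq K_k(t,\ldots,t)$, regularity lemma, Tur\'an's theorem applied to the reduced graph, and an embedding/counting lemma is exactly how the result is typically presented in textbooks today. The ``principal obstacle'' you flag is real but is handled by the off-the-shelf embedding lemma (or key lemma) for regular pairs, so this is routine rather than a genuine gap. Note that the original 1946 proof predates the regularity lemma and proceeds essentially by the inductive density argument you mention as an alternative at the end; that is closer in spirit to what the cited references actually contain.
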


Following a number of 
results concerning $\ex(n,H,F)$ when $H \not= K_2$ (see, for example,~\cite{BG08,B76,E62,HKNR13}), the systematic study of this function in the more general setting was initiated by Alon and Shikhelman~\cite{AS15}. This has sparked a number of directions of research and bounds on $\ex(n,H,F)$  are now known for many families of $H$ and $F$, but exact results are rare. (See e.g.~\cite{GP22} for additional details.)

One difficulty in precisely determining $\ex(n,H,F)$ is identifying a potential \emph{extremal graph}, that is an $n$-vertex $F$-free graph $G$ such that  $\ex(n,H,F)=\mc{N}(H,G)$. In many cases when the problem is tangible, the extremal graph turns out to be the Tur\'an graph. Let $F$ be a graph with chromatic number $k+1$ and say that a graph $H$ is \emph{$F$-Tur\'an-good} if for $n$ sufficiently large, $\ex(n,H,F) = \mc{N}(H,T_{k}(n)))$. That is, the Tur\'{a}n graph $T_{k}(n)$ is an $n$-vertex $F$-free graph containing the maximum possible number of copies of $H$. The term \emph{Tur\'an-good} was recently introduced by Gerbner and Palmer~\cite{GP22}, but the study of this phenomenon goes back much further to work of Gy\"{o}ri, Pach and Simonovits~\cite{GPS91}. See~\cite{GP22} for a comprehensive summary of what is known so far about $F$-Tur\'{a}n-good graphs.

Gerbner and Palmer~{\cite[Conjecture 20]{GP22}} conjectured that for every graph $H$ there exists $r_0 = r_0(H)$ such that $H$ is $K_{r+1}$-Tur\'an-good for every $r \geq r_0(H)$. This conjecture is known to hold for stars~\cite{CNR22}, more generally for complete multipartite graphs~\cite{GP22}, for paths~\cite{G22} and for the $5$-cycle~\cite{LM21}.
 In this paper we prove the conjecture holds with $r_0 = 300\v(H)^9$.
 
\begin{theorem}\label{thm:main1}
Let $H$ be a graph and $r \geq 300\v(H)^9$. Then $H$ is $K_{r+1}$-Tur\'{a}n-good. 
 \end{theorem}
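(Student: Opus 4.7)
The plan is a stability-and-symmetrization argument. Given a $K_{r+1}$-free graph $G$ on $n$ vertices satisfying $\mc{N}(H,G) \geq \mc{N}(H,T_r(n))$, the goal is to show $G = T_r(n)$, and I would proceed in three phases: first, use stability to show $G$ is structurally close to a complete $r$-partite graph; second, apply a Zykov-style symmetrization to convert $G$ into a complete $r$-partite graph without decreasing $\mc{N}(H,G)$; third, show that among complete $r$-partite graphs on $n$ vertices, the balanced one maximizes the count.

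For the first phase, a Kruskal--Katona-type bound of the form $\mc{N}(H,G) \leq C_H \cdot \e(G)^{\v(H)/2}$, combined with the lower bound $\mc{N}(H,T_r(n)) = \Theta(n^{\v(H)})$, forces $\e(G)$ to be within $o(n^2)$ of $\e(T_r(n))$; the Erd\H{o}s--Simonovits stability theorem then yields a partition $V(G) = V_1 \cup \cdots \cup V_r$ such that all but $o(n^2)$ pairs are in-part non-edges or cross-part edges. For the second phase, within each part $V_i$ I would iterate a Zykov-style move: given non-adjacent $u,v \in V_i$, compare the numbers $N(H,G,u)$ and $N(H,G,v)$ of $H$-copies through $u$ and $v$, and replace the neighborhood of the smaller with that of the larger so they become twins. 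This preserves $K_{r+1}$-freeness (since $u$ and $v$ remain non-adjacent), and the choice of replacement ensures $\mc{N}(H,G)$ does not decrease, up to an error from $H$-copies using both $u$ and $v$ in non-adjacent positions. Iterating and cleaning up yields a complete $r$-partite graph $K_{a_1,\ldots,a_r}$. For the third phase, $\mc{N}(H, K_{a_1,\ldots,a_r})$ expands as a symmetric polynomial in the $a_i$, and for $r$ large compared to $\v(H)$ a short convexity/perturbation argument shows this is strictly maximized on $\sum a_i = n$ at $a_1 = \cdots = a_r$.

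The main obstacle is the symmetrization step: the inequality $\mc{N}(H,G') \geq \mc{N}(H,G)$ after a Zykov move is not automatic for general $H$, because copies that embed $u$ and $v$ into non-adjacent positions of $H$ may be destroyed. The hypothesis $r \geq 300\v(H)^9$ should supply enough slack: when $r$ is much larger than $\v(H)$, the typical copy of $H$ in a near-Tur\'an graph spreads across many parts, so copies interacting badly with any fixed in-part pair $(u,v)$ are a tiny fraction of the total. Quantifying this likely requires an auxiliary lemma bounding the number of ``bad'' copies in terms of the edit distance from $T_r(n)$, and then iterating symmetrization carefully so that the accumulated error remains under control throughout, culminating in a final uniqueness step that rules out any complete $r$-partite graph other than $T_r(n)$.
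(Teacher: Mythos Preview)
Your three-phase outline mirrors the paper's structure, but phases~2 and~3 contain real gaps that the paper handles quite differently.

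In phase~2, a Zykov move on a non-adjacent pair $u,v$ changes $\mc{N}(H,\cdot)$ by $(c(v)-c(u))+(T-S)$, where $T$ and $S$ count copies of $H$ using both vertices after and before the move; this correction is of order $n^{\v(H)-2}$ and can have either sign, so over the $\Theta(n)$ moves needed to reach a complete multipartite graph the accumulated error can reach order $n^{\v(H)-1}$, which is fatal for an exact inequality. The paper avoids symmetrization altogether. It first runs a clone-the-best-vertex argument to force the extremal $G$ to be $\delta$-dense, and then applies F\"uredi's \emph{sharp} stability theorem to pass to a spanning $r$-partite \emph{subgraph} $G_0\subseteq G$ with $\e(G)-\e(G_0)\le \e(T_r(n))-\e(G)$. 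Deleting edges trivially costs at most $2\e(H)\bigl(\e(G)-\e(G_0)\bigr)n^{\v(H)-2}$ injective homomorphisms, while a separate lemma gives $\ii(H,T_r(n))-\ii(H,G_0)\ge \e(H)\bigl(\e(T_r(n))-\e(G_0)\bigr)n^{\v(H)-2}$; F\"uredi's inequality, which rewrites as $\e(T_r(n))-\e(G_0)\ge 2(\e(G)-\e(G_0))$, is exactly what makes the gain dominate the loss. Erd\H{o}s--Simonovits stability, with only $o(n^2)$ control, is too coarse for this arithmetic.

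Phase~3 is also not ``a short convexity argument'': $\ii(H,K_{a_1,\ldots,a_r})$ is not convex in the $a_i$, and showing that shifting one vertex from a larger part $A$ to a smaller part $B$ gains at least $2\e(H)(|A|-|B|)(1-3\delta\v(H)^3)n^{\v(H)-2}$ homomorphisms is in fact the paper's main technical lemma. Its proof requires an explicit injection between homomorphism sets to control the contribution of maps sending three or more vertices of $H$ into $A\cup B$; this, together with the density bound from the min-degree step, is where the hypothesis $r\ge 300\v(H)^9$ is actually spent.
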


 Theorem~\ref{thm:main1} follows from a more technical result (Theorem~\ref{thm:main2} below, which also implies that for $r \geq 300\v(H)^9$ Tur\'an graphs always maximize the number of copies of $H$ among $K_{r+1}$-free graphs on any given number of vertices, i.e.,~the requirement that the number of vertices is large compared to $r$ is unnecessary). 

In~\cite{LM21} it is conjectured that Theorem~\ref{thm:main1} should hold with $r_0(H)=\v(H)+1$. Counterexamples are known for all $r \le \chi(H)+1$, but determining the exact value of $r_0(H)$ remains open, see~\cite{G22b, GGST22, KM22, LM21}. We did not attempt to optimize our bound, beyond ensuring that it is polynomial in $\v(H)$, and prioritized simplicity of presentation. However,   our methods are unlikely to extend to this stronger conjecture. Even a bound quadratic in $\v(H)$ would likely require additional ideas.

\subsection{Preliminary definitions and proof outline}

Throughout the paper we use $\v(G)$ and $\e(G)$ to denote the number of vertices and edges, respectively, in a graph $G$. It will be more convenient for us to work with homomorphism counts rather than subgraphs, and we precede the statement of Theorem~\ref{thm:main2} with the necessary definitions. 

Recall that we say a map $\varphi: V(H) \to V(G)$ is \emph{a homomorphism from $H$ to $G$} if $\varphi(u)\varphi(v) \in E(G)$ for every $uv \in E(H)$. 
We denote by $\II(H,G)$ the set of all injective homomorphisms from $H$ to $G$, and let $\ii(H,G)= |\II(H,G)|$. 

Clearly, $\ii(H,G) = |\text{Aut}(H)|\cdot\mc{N}(H,G)$, where $\text{Aut}(H)$ is the automorphism group of $H$, and so a graph $H$ is $K_{r+1}$-Tur\'an-good if and only if $\ii(H,G) \leq \ii(H,T_r(n))$ for  every $n$-vertex $K_{r+1}$-free graph $G$ (and $n$ is sufficiently large). 

Theorem~\ref{thm:main1} is a direct consequence of the following theorem.

\begin{theorem}
\label{thm:main2}
For every graph $H$, every $r \geq 300\v(H)^9$, and every $n$-vertex $K_{r+1}$-free graph $G$ we have
$$
    \ii(H,G)\leq \ii(H,T_r(n)).
$$
\end{theorem}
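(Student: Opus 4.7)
My plan is to attack the inequality $\ii(H, G) \le \ii(H, T_r(n))$ in two stages: first, reduce $G$ to a complete multipartite graph with at most $r$ parts via a Zykov-type vertex symmetrization; then, among such graphs, show that the balanced one (namely $T_r(n)$) maximizes $\ii(H, \cdot)$.

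\textbf{Stage 1: Reduction to complete multipartite graphs.} Suppose $G$ is $K_{r+1}$-free and contains two non-adjacent vertices $u, v$ with $N_G(u) \ne N_G(v)$. Let $G_{u \to v}$ (resp.\ $G_{v \to u}$) be obtained from $G$ by replacing $N_G(u)$ with $N_G(v)$ (resp.\ the reverse). Both remain $K_{r+1}$-free, since a clique of size $r+1$ using the modified vertex would, by transferring to the template, give a clique in $G$. The central claim is the averaging inequality
\[
\ii(H, G_{u \to v}) + \ii(H, G_{v \to u}) \;\ge\; 2\, \ii(H, G),
\]
which guarantees at least one modification does not decrease $\ii(H, \cdot)$. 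I would establish this by splitting the sum defining $\ii(H, G)$ according to which vertices of $H$ (if any) map to $u$ and to $v$, and applying a convexity inequality to the resulting quadratic expression; the prototype $H = P_3$ already captures the idea, via $(d+\delta)^2 + (d-\delta)^2 \ge 2 d^2$. Iterating strictly decreases the number of non-twin non-adjacent pairs, and termination yields a complete multipartite graph on at most $r$ parts.

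\textbf{Stage 2: Balancing among complete multipartite graphs.} For $K = K_{n_1, \ldots, n_r}$ with $n_i \ge 0$ and $\sum n_i = n$, one has the exact expansion
\[
\ii(H, K) \;=\; \sum_{c} \prod_{i=1}^{r} (n_i)_{|c^{-1}(i)|},
\]
summed over all proper colorings $c : V(H) \to [r]$, where $(m)_k = m(m-1)\cdots(m-k+1)$ is the falling factorial. I would prove this expression is Schur-concave in $(n_1, \ldots, n_r)$ by a pairwise swap lemma: if $n_i + 1 < n_j$, then replacing $(n_i, n_j)$ by $(n_i + 1, n_j - 1)$ does not decrease the sum. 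This reduces, after pairing colorings via the involution that exchanges colors $i$ and $j$, to elementary monotonicity of products of falling factorials. The Schur-concave maximum is attained at the balanced partition, which is exactly $T_r(n)$.

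\textbf{Main obstacle.} The heart of the difficulty is Stage 1. For ordinary homomorphism counts $\hom(H, \cdot)$, the analogous averaging inequality is a short linearity/averaging observation; but $\ii(H, \cdot)$ carries cross-terms from injective homomorphisms whose image meets both $\{u,v\}$, and these are not obviously dominated by the diagonal contributions. I expect the hypothesis $r \ge 300\,\v(H)^9$ to enter precisely here: one should be able to bound each cross-term by an injective count of a proper subgraph of $H$ into $G - u - v$, and then absorb these errors using the density forced on $G$ by $K_{r+1}$-freeness, so that the diagonal terms dominate once $r$ is sufficiently large compared to $\v(H)$. Pushing this dependence down to a polynomial in $\v(H)$ is presumably what fixes the concrete exponent in the hypothesis.
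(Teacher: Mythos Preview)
Both stages contain genuine gaps. In Stage~1, the averaging inequality $\ii(H,G_{u\to v})+\ii(H,G_{v\to u})\ge 2\,\ii(H,G)$ reduces, after the terms with at most one of $u,v$ in the image cancel, to an inequality among the ``both-$u$-and-$v$'' contributions; this residual inequality can fail, so neither symmetrization is guaranteed to be non-decreasing. Your proposed rescue via ``density forced on $G$ by $K_{r+1}$-freeness'' is backwards: $K_{r+1}$-freeness gives no lower bound on degrees. Density can only come from \emph{extremality} of $G$, and establishing it is a nontrivial step your outline omits. In Stage~2, the unconditional Schur-concavity claim is also false: already for $H=K_{1,3}$ and $r=2$ one has $\ii(K_{1,3},K_{2,2})=0<6=\ii(K_{1,3},K_{1,3})$, so balancing can strictly \emph{decrease} the count. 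The swap lemma you invoke only holds once all parts are $o(n)$, i.e.\ again under a density hypothesis; so the large-$r$ assumption is needed in Stage~2 too, not only in Stage~1, and your ``elementary monotonicity of falling factorials'' does not go through as stated.

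The paper avoids Zykov symmetrization altogether. It first proves, via a vertex-\emph{cloning} argument (replace any vertex by a twin of a vertex lying in many homomorphisms, and invoke extremality), that an extremal $G$ is $\delta$-dense with $\delta=O(\v(H)^3/r)$; this is where $r\ge 300\v(H)^9$ is actually used. It then applies F\"uredi's sharp stability theorem to obtain an $r$-partite subgraph $G_0\subseteq G$ with $\e(G)-\e(G_0)\le \e(T_r(n))-\e(G)$, bounds $\ii(H,G)-\ii(H,G_0)$ crudely by $2\e(H)(\e(G)-\e(G_0))n^{\v(H)-2}$, and finally proves a quantitative rebalancing lemma (your Stage~2, but carried out under the density hypothesis) yielding $\ii(H,T_r(n))-\ii(H,G_0)\ge \e(H)(\e(T_r(n))-\e(G_0))n^{\v(H)-2}$. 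The two edge-count inequalities then combine. The point of using F\"uredi rather than Zykov is that it converts the comparison into additive edge bookkeeping, sidestepping precisely the convexity failure of $\ii$ that blocks your Stage~1.
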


\subsection{Proof Outline}

Here we present an informal outline of the proof, which will be given in Section~\ref{sec:proof}. As the theorem trivially holds when $n \le r$, we can assume that $n \geq r+1 \geq 300\v(H)^9$, and so $n$ is much larger than $\v(H)$. Thus almost every $\varphi: V(H) \to V(T_r(n))$  is in $\II(H,T_r(n))$ and most of the maps that are not injective homomorphisms fail due to a single edge of $H$ being mapped to a non-edge of $T_r(n)$, i.e.,~there are approximately $\e(H)(n^2-2\e(T_r(n)))n^{\v(H)-2}$ failures. Similarly, if an $n$-vertex $K_{r+1}$-free graph $G$ maximizes $\ii(H,G)$, approximately $\e(H)(n^2-2\e(G))n^{\v(H)-2}$ maps fail to be an injective homomorphism from $H$ to $G$. As $\e(T_r(n)) \geq \e(G)$, Theorem~\ref{thm:main2} would follow as long as we can control the error terms implicit in the term ``approximately''. 

Arguing along the lines of the above sketch, we additionally see that adding any edge to $G$ increases $\ii(H,G)$ and removing any edge decreases $\ii(H,G)$ by approximately the same amount. If $G$ could be transformed to $T_r(n)$ by changing $O\bigl(\e(T_r(n)) -\e(G)\bigr)$ adjacencies 
then the naive estimates hinted at above would suffice to make the argument precise. Unfortunately, such a transformation is not always possible. However, a sharp stability version of Tur\'an's theorem due to F\"uredi~\cite{furedi2015proof} allows us to transform $G$ into a (not necessarily balanced) $r$-partite graph $G_0$.

\begin{theorem}[F\"uredi~\cite{furedi2015proof}]
\label{thm:Furedi}
Every $n$-vertex $K_{r+1}$-free graph $G$ contains an $r$-partite subgraph $G_0$ such
that $$\e(G) - \e(G_0) \leq \e(T_r(n)) - \e(G).$$
\end{theorem}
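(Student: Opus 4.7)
The plan is to analyze an $r$-partition $V(G) = V_1 \cup \cdots \cup V_r$ that maximizes the number of between-part edges, letting $G_0$ denote the corresponding spanning $r$-partite subgraph. The key local property of such a partition is that every $v \in V_i$ satisfies $|N(v) \cap V_i| \leq |N(v) \cap V_j|$ for all $j \neq i$, since otherwise moving $v$ into $V_j$ would strictly increase $\e(G_0)$. Averaging the inequality over the other parts immediately gives $|N(v) \cap V_i| \leq d_G(v)/r$, and summing yields the crude bound $\e(G) - \e(G_0) \leq \e(G)/r$. This is of the right shape but is not quite tight enough on its own.

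Write $B := \e(G) - \e(G_0)$ for the number of within-part edges, $M$ for the number of between-part non-edges of $G$, and $G_0^{\ast}$ for the complete $r$-partite graph on the parts $V_1, \dots, V_r$. Since $\e(G) = \e(G_0^{\ast}) - M + B$, the target inequality $B \leq \e(T_r(n)) - \e(G)$ rearranges to
\[
    2B - M \leq \e(T_r(n)) - \e(G_0^{\ast}).
\]
As $T_r(n)$ maximizes edges among balanced $r$-partite graphs on $n$ vertices, the right-hand side is nonnegative, and it vanishes only when the part sizes differ by at most one. The heart of the problem is therefore to charge each within-part edge against roughly two between-part non-edges, using the balancing slack to absorb any deficit when the partition is far from balanced.

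To carry out the charging, I would exploit $K_{r+1}$-freeness locally: for each bad edge $uv \subseteq V_i$ and each other part $V_j$, the sets $N(u) \cap V_j$ and $N(v) \cap V_j$ cannot jointly dominate a transversal $(r-1)$-clique across the remaining parts, which forces between-part non-edges; combining this with the optimality inequalities $|N(u) \cap V_i| \leq |N(u) \cap V_j|$ and the analogous one for $v$ should supply, on average, two missing-edge certificates per bad edge. The main obstacle I expect is avoiding double counting: $N(u) \cap V_j$ and $N(v) \cap V_j$ typically overlap heavily, and one missing edge can be witnessed by several bad edges, so some care is needed in choosing the witness part or in introducing a weighted charging whose weights match the balancing slack exactly. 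As a fallback, one may induct on $n$ by deleting a minimum-degree vertex $v$ and reinserting it into the part minimizing its in-part degree, which adds at most $d(v)/r$ to $B$ while gaining $\e(T_r(n)) - \e(T_r(n-1)) \approx \tfrac{r-1}{r}(n-1)$; this closes whenever $d(v) \leq \tfrac{r-1}{r+1}(n-1)$, and the remaining near-regular, near-extremal case can then be handled directly by the maximum-cut analysis of the first paragraph.
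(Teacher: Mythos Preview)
The paper does not prove this statement; it is quoted from F\"uredi and used as a black box in Section~\ref{sec:proof}. So there is no in-paper argument to compare against, but your sketch has genuine gaps, and it is worth contrasting with F\"uredi's actual proof.

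Your reformulation $2B - M \leq \e(T_r(n)) - \e(G_0^{\ast})$ is correct, but neither route you propose closes it. The charging paragraph never actually produces two missing cross-edges per bad edge: $K_{r+1}$-freeness at a bad edge $uv$ only says that $N(u)\cap N(v)$ is $K_{r-1}$-free, a global constraint on common neighbours across all parts that does not localise to any particular $V_j$, and the max-cut inequalities $|N(u)\cap V_i|\le |N(u)\cap V_j|$ do not convert this into the required count; the double-counting worry you raise is exactly the obstruction, and you have not overcome it. The induction fallback also has a hole: in the residual case $\delta(G)>\tfrac{r-1}{r+1}(n-1)$ you defer to ``the maximum-cut analysis of the first paragraph'', but that analysis gave only $B\le \e(G)/r$, which you already observed is insufficient precisely when $\e(G)$ is near the Tur\'an bound---and high minimum degree forces exactly that regime.

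F\"uredi's argument sidesteps all of this by abandoning the max-cut partition. Pick $x_1$ of maximum degree in $G$, then $x_2$ of maximum degree in $G[N(x_1)]$, and so on, and set $V_i=\bigl(\bigcap_{j<i}N(x_j)\bigr)\setminus N(x_i)$. For $v\in V_i$ the degree of $v$ inside $\bigcap_{j<i}N(x_j)=V_i\cup\cdots\cup V_r$ is at most that of $x_i$, namely $|V_{i+1}\cup\cdots\cup V_r|$; hence $v$'s in-part degree is bounded by its number of \emph{forward} cross-part non-neighbours. Summing over all $v$, the left side is $2B$ and the right side is exactly $M$ (each missing cross-edge is forward for precisely one endpoint), so $2B\le M$ outright and the balancing slack $\e(T_r(n))-\e(G_0^{\ast})\ge 0$ is not even needed. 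The max-cut partition compares each in-part degree to all other parts symmetrically, which loses a factor; the nested-neighbourhood partition orients the comparison and makes the charging automatic.
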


Given this, we subsequently transform the $r$-partite graph $G_0$ into $T_r(n)$, carefully controlling the homomorphism count, which gives the required result.

The key technical lemmas for our proof will be given in Section~\ref{sec:lemmas}, before using them to complete the proof of Theorem~\ref{thm:main2} in Section~\ref{sec:proof}.

\section{Density Lemmas}\label{sec:lemmas}
The goal of this section is to prove Lemma~\ref{l:rpartite}, which bounds the difference between $\ii(H,T_r(n))$ and $\ii(H,G)$ for an $r$-partite graph $G$, by a function depending on the `density' of $G$. To capture the notion of density that will be useful to us, we define
a graph $G$ to be \emph{$\delta$-dense} for some $\delta >0$ if  $\deg(v) \geq (1-\delta)\v(G)$  for every $v \in V(G)$. 

We begin by showing that if $\delta$ is small enough, then in every $\delta$-dense graph every partial injective homomorphism has almost the maximum number of extensions.

\begin{lemma}\label{l:lowerbound}
	Let $G$ and $H$ be graphs such that $G$ is $\delta$-dense. Let $H' = H[X]$  for some $X \subseteq V(H)$. Let $k=\v(H)-|X|$. Then for every $\varphi' \in \II(H',G)$ there exist at least $$\bigl(1 - \delta k\cdot\v(H)\bigr) \cdot \v(G)^{k}$$ injective homorphisms  $\varphi \in \II(H,G)$ such that  $\varphi|_X=\varphi'.$
\end{lemma}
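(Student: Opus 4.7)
The plan is to build extensions of $\varphi'$ greedily, one vertex at a time, and count the number of choices at each step. Fix an enumeration $V(H)\setminus X=\{v_1,\dots,v_k\}$ and let $X_i:=X\cup\{v_1,\dots,v_{i-1}\}$, so $\varphi_1=\varphi'$ is an injective homomorphism $H[X_1]\to G$. Given such a partial $\varphi$ defined on $X_i$, a choice of $\varphi(v_i)\in V(G)$ extends it to an injective homomorphism on $H[X_{i+1}]$ precisely when $\varphi(v_i)\in N_G(\varphi(w))$ for every $w\in X_i$ with $v_iw\in E(H)$ and $\varphi(v_i)\neq\varphi(w)$ for every $w\in X_i$.

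The key step is to argue that each $w\in X_i$ rules out at most $\delta\v(G)$ candidates for $\varphi(v_i)$. If $v_iw\in E(H)$ the forbidden set is $V(G)\setminus N_G(\varphi(w))$, of size at most $\delta\v(G)$ by $\delta$-density, and this set already contains $\varphi(w)$; if $v_iw\notin E(H)$ the forbidden set is $\{\varphi(w)\}$, which is also bounded by $\delta\v(G)\geq 1$ (any simple graph satisfies $\deg\leq\v(G)-1$, forcing $\delta\v(G)\geq 1$ whenever $\v(G)\geq 1$). A union bound over the $|X_i|\leq\v(H)-1$ vertices of $X_i$ then yields at least $\v(G)\bigl(1-(\v(H)-1)\delta\bigr)$ valid images for $v_i$, and multiplying over the $k$ stages produces at least $\v(G)^k\bigl(1-(\v(H)-1)\delta\bigr)^k$ injective extensions of $\varphi'$.

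To conclude, I would split on the size of $(\v(H)-1)\delta$: if it exceeds $1$, the target bound $(1-\delta k\v(H))\v(G)^k$ is non-positive and the lemma is vacuous; otherwise Bernoulli's inequality delivers $\bigl(1-(\v(H)-1)\delta\bigr)^k\geq 1-k(\v(H)-1)\delta\geq 1-\delta k\v(H)$, which is exactly the claimed bound. I do not anticipate a genuine obstacle; the only real subtlety is noticing that the density estimate $\delta\v(G)$ simultaneously absorbs both the homomorphism and the injectivity constraints, so that no cross-terms or double-counting appear in the union bound, and the rest is routine book-keeping together with Bernoulli.
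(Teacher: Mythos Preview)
Your proof is correct and follows essentially the same approach as the paper: the paper also extends $\varphi'$ one vertex at a time, bounding the number of valid images at each step by $(1-\delta\v(H))\v(G)$ via a union bound over the already-placed vertices, and then combines the steps by induction on $k$ (using $(1-a)(1-b)\geq 1-a-b$ at each step) rather than by an explicit application of Bernoulli at the end. The only cosmetic difference is that the paper simply requires the new image to be adjacent to \emph{all} previously used images---which simultaneously enforces the homomorphism and injectivity constraints---whereas you separate the edge and non-edge cases before bounding both by $\delta\v(G)$.
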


\begin{proof}
	The proof proceeds by induction on $k$. The case $k=0$ is trivial. 
	
	Suppose now $k=1$ and let $V(H) \setminus X = \{u\}$. By the degree condition  at least $\bigl(1 - \delta\v(H)\bigr)\v(G)$ vertices are adjacent to every vertex in $\varphi'(V(H'))$ and setting $\varphi(u)=v$ for any such vertex $v$ extends $\varphi'$ to an  injective homomorphism  $\varphi$ from $H$ to $G$, as desired.
	
	The induction step follows readily from the base case. Let $u \in V(H)-X$ be arbitrary and let $H''=H[X \cup \{u\}]$ 
	By the base case, for every $\varphi' \in \II(H',G)$ there exist at least $\bigl( 1-\delta\v(H'') \bigr) \cdot \v(G)$ maps $\varphi'' \in \II(H'',G)$ such that $\varphi''|_X=\varphi'.$ By the induction hypothesis for every such $\varphi''$ there are at least
	$$\bigl(1 - \delta (k-1)\v(H)\bigr) \cdot \v(G)^{k-1}$$  injective homomorphisms  $\varphi \in \II(H,G)$ such that  $\varphi|_{X \cup \{u\}}=\varphi''.$ Thus we get at least 
	\begin{align}
	\bigl( 1-\delta\v(H'') \bigr) \cdot \v(G) \cdot \bigl(1 - \delta (k-1)\v(H)\bigr) \cdot \v(G)^{k-1} \geq \bigl(1 - \delta k\v(H)\bigr) \cdot \v(G)^{k}    
	\label{eq:induction}
	\end{align}
	injective homomorphisms $\varphi$ from $H$ to $G$ extending $\varphi'$ as desired.
\end{proof}

Setting $X = \emptyset$ in Lemma~\ref{l:lowerbound}, i.e., letting $k = \v(H)$, yields the following corollary.

\begin{corollary}\label{l:iJG}
	If $G$ and $H$ are graphs such that $G$ is $\delta$-dense,
	then $$\ii(H,G) \geq \bigl( 1 - \delta \v(H)^2 \bigr) \cdot \v(G)^{\v(H)}.$$
\end{corollary}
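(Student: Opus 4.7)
The plan is to prove this as an immediate specialization of Lemma~\ref{l:lowerbound}; indeed, the sentence preceding the corollary already points to the exact substitution to make, so the task is essentially to verify that the boundary case $X = \emptyset$ behaves as expected and that the resulting arithmetic matches.

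Concretely, I would take $X = \emptyset$ in Lemma~\ref{l:lowerbound}, so that $H' = H[\emptyset]$ is the empty graph on zero vertices and $k = \v(H) - |X| = \v(H)$. With this convention, $\II(H',G)$ consists of a single element, namely the empty function $\varphi' \colon \emptyset \to V(G)$, which is vacuously an injective homomorphism. Lemma~\ref{l:lowerbound} applied to this $\varphi'$ then guarantees the existence of at least
$$\bigl(1 - \delta \cdot \v(H) \cdot \v(H)\bigr) \cdot \v(G)^{\v(H)} = \bigl(1 - \delta \v(H)^2\bigr) \cdot \v(G)^{\v(H)}$$
injective homomorphisms $\varphi \in \II(H,G)$ with $\varphi|_\emptyset = \varphi'$. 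Since the restriction condition is vacuous, these maps are precisely the elements of $\II(H,G)$, yielding the claimed lower bound on $\ii(H,G)$.

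There is no real obstacle here; the only thing worth double-checking is the conventions at $k=0$ (which the proof of Lemma~\ref{l:lowerbound} handles by calling trivial) and the counting $|\II(H[\emptyset],G)| = 1$. A minor stylistic point is that one could alternatively reprove this directly by induction on $\v(H)$, mirroring the proof of Lemma~\ref{l:lowerbound}, but invoking the lemma with $X = \emptyset$ is shorter and avoids duplicating the argument.
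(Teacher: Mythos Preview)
Your proposal is correct and follows exactly the same approach as the paper: the corollary is obtained by setting $X = \emptyset$ (so $k = \v(H)$) in Lemma~\ref{l:lowerbound}, precisely as indicated in the sentence preceding the statement.
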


The next lemma is the key technical step in the proof of Lemma~\ref{l:rpartite}, controlling the change of homomorphism counts in a multipartite graph as we rebalance the sizes of the parts.

\begin{lemma}\label{l:rebalance}
	Let $H$ be a graph with at least one edge. Let $0 < \delta  \leq 1/4$, let $G$ be a $\delta$-dense graph, and let $A,B \subseteq V(G)$ be a pair of disjoint independent sets such that \begin{itemize}
		\item $|A| \geq |B| \geq 1$,
		\item every vertex in $A$ is adjacent to every vertex in $V(G)-A$,  and
		\item every vertex in $B$ is adjacent to every vertex in $V(G)-B$.
	%	 \item $G \setminus A \setminus B$ is $\delta$-dense.
		 \end{itemize} Let $G_A$ and $G_B$ be obtained from $G$ by deleting one vertex in $A$ and $B$, respectively.
Then \[\ii(H,G_A) \geq \ii(H,G_B) + 2\e(H)(|A|-|B|)\bigl(1 - 3\delta\v(H)^3\bigr) \cdot \v(G)^{\v(H)-2}.\]
\end{lemma}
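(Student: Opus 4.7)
The plan is to compute $\ii(H, G_A) - \ii(H, G_B)$ exactly via a decomposition of injective homomorphisms by their preimages in $A$ and $B$, isolate the leading contribution, and control the remainder. Since $A$ and $B$ are independent sets in $G$ each joined to all outside vertices, for any injective homomorphism $\varphi: V(H) \to V(G)$ the preimages $X := \varphi^{-1}(A)$ and $Y := \varphi^{-1}(B)$ must be disjoint independent subsets of $V(H)$; conversely, for each such pair $(X, Y)$ the number of extensions is exactly $(|A|)_{|X|}(|B|)_{|Y|} \cdot \ii(H - X - Y, G[V'])$, where $V' := V(G) \setminus (A \cup B)$ and $(n)_k$ is the falling factorial. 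Applied to $G_A$ and $G_B$ (which have ``$A$-parts'' of sizes $|A|-1$ and $|A|$ and ``$B$-parts'' $|B|$ and $|B|-1$ respectively, while $V'$ is unchanged), subtraction yields
\[ \ii(H, G_A) - \ii(H, G_B) = \sum_{X, Y} f(|X|, |Y|) \cdot \ii(H - X - Y, G[V']) \]
with $f(p, q) := (|A| - 1)_p (|B|)_q - (|A|)_p (|B| - 1)_q$ and the sum over ordered pairs of disjoint independent subsets $X, Y \subseteq V(H)$.

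Levels $|X|+|Y| \leq 1$ vanish. Level $|X|+|Y| = 2$ is the main term: a direct computation combining the three subcases $(|X|, |Y|) \in \{(2,0), (1,1), (0,2)\}$ exhibits a cancellation of the contributions from unordered \emph{independent} vertex pairs of $H$, leaving exactly $2(|A|-|B|)\sum_{xy \in E(H)} \ii(H-\{x,y\}, G[V'])$. Each $v \in V'$ has at most $|A|+|B| \leq 2\delta\v(G)$ neighbors deleted in passing to $G[V']$, so $G[V']$ is $3\delta$-dense and $|V'| \geq (1-2\delta)\v(G)$; Corollary~\ref{l:iJG} then gives $\ii(H-\{x,y\}, G[V']) \geq (1 - O(\delta\v(H)^2))\v(G)^{\v(H)-2}$, so the main term is at least $2\e(H)(|A|-|B|)(1 - O(\delta\v(H)^2))\v(G)^{\v(H)-2}$.

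For each level $k = |X|+|Y| \geq 3$ I will show the contribution $L_k$ is small enough to fit in the $3\delta\v(H)^3$ error budget. Two observations are decisive. \emph{First}, by Vandermonde, $\sum_{p+q=k}\binom{k}{p}[(|A|-1)_p(|B|)_q - (|A|)_p(|B|-1)_q] = (|A|+|B|-1)_k - (|A|+|B|-1)_k = 0$, so for any \emph{independent} $k$-subset $Z$ (where all $2^k$ partitions into $(X, Y)$ are valid) the partition sum of $f$ vanishes identically; only non-independent $k$-subsets contribute, of which there are at most $\e(H)\binom{\v(H)-2}{k-2}$. \emph{Second}, for each non-independent $Z$ the partition sum still vanishes at $|A|=|B|$: evaluating $f(p,q)|_{|A|=|B|=n} = n\,(n-1)_{p-1}(n-1)_{q-1}(q-p)$ shows that under the involution $(X, Y) \mapsto (Y, X)$ (which always preserves the independence constraint) each summand is negated, so pairs cancel. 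Hence the partition sum is divisible by $(|A|-|B|)$; moreover, the $|A|^p|B|^q$ leading terms cancel in $f$, so $f$ has total degree at most $k-1$ in $(|A|, |B|)$ and the quotient polynomial has total degree at most $k-2$, bounded by $O((2\delta\v(G))^{k-2})$. Using $\ii(H-Z, G[V']) \leq \v(G)^{\v(H)-k}$, the level-$k$ contribution is at most $(|A|-|B|)\e(H) \cdot C_k (2\delta\v(H))^{k-2}\v(G)^{\v(H)-2}/(k-2)!$; summing over $k \geq 3$ in the meaningful regime $\delta\v(H)^3 \leq 1/3$ (outside which the claim is trivial) keeps the total error within the allotted budget.

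The \textbf{main obstacle} is the level-$k \geq 3$ analysis: establishing the per-$Z$ divisibility of the partition sum by $(|A|-|B|)$ and the total-degree estimate for the quotient. The Vandermonde-based vanishing for independent $k$-sets is essential to suppress a would-be factor of $\v(G)$ in the error, and the cancellation of the $|A|^p|B|^q$ leading terms in $f$ is what pins the exponent of $\v(G)$ in the remainder to $\v(H)-2$, matching the main term.
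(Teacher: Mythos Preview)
Your decomposition by the ordered pair $(X,Y)=(\varphi^{-1}(A),\varphi^{-1}(B))$ is a refinement of the paper's decomposition by $S=\varphi^{-1}(A\cup B)$, and for $|X|+|Y|\le 2$ your computation reproduces exactly the paper's main-term Claim (their $\Delta(S)=2(|A|-|B|)\,\ii(H\setminus S,G_0)$ for an edge $S$). The divergence is in the tail $|X|+|Y|\ge 3$. The paper does not argue algebraically: it fixes $A'\subseteq A$ with $|A'|=|B|$, builds an explicit involution $\iota$ swapping $A'$ with $B$ (and $a\leftrightarrow b$), and uses it to inject almost all of $\II_S(H,G_B)$ into $\II_S(H,G_A)$; the unmatched homomorphisms are precisely those that hit $a$, a vertex of $A\setminus A'$, and a further vertex of $A\cup B$, giving the clean count $\v(H)^3(|A|-|B|)(|A|+|B|)\v(G)^{\v(H)-3}\le 2\delta\v(H)^3(|A|-|B|)\v(G)^{\v(H)-2}$ in one line. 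Your route --- Vandermonde to kill independent $Z$, then the swap antisymmetry $f(p,q)|_{a=b}=-f(q,p)|_{a=b}$ to extract a factor $(|A|-|B|)$, plus the leading-term cancellation to drop the total degree --- is a genuinely different and rather elegant mechanism for producing the same two savings (one of $|A|-|B|$ and one of $\delta\v(G)$).

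The one real gap in your outline is quantitative. You write the level-$k$ remainder as $(|A|-|B|)\,\e(H)\,C_k(2\delta\v(H))^{k-2}\v(G)^{\v(H)-2}/(k-2)!$ but never control $C_k$: it absorbs the sum of absolute coefficients of the quotient $Q_Z=P_Z/(a-b)$, and those coefficients come from Stirling-number expansions of the falling factorials together with the change of variables implicit in the division by $a-b$, so a priori $C_k$ could be of order $2^{O(k)}k!$. That is not fatal to the approach --- in the regime $\delta\v(H)^3\le 1/3$ one has $\delta\v(H)\le 1/(3\v(H)^2)$, which decays fast enough to beat any fixed exponential in $k\le\v(H)$ --- but it does mean your argument, as written, yields $1-C\delta\v(H)^3$ for some absolute $C$ rather than the specific $1-3\delta\v(H)^3$ of the lemma. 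To hit the stated constant you would need either a sharper direct bound on $|Q_Z(|A|,|B|)|$ (for instance via the identity $f(p,q)=q(|A|-1)_p(|B|-1)_{q-1}-p(|A|-1)_{p-1}(|B|-1)_q$, which already exhibits degree $k-1$ with explicit coefficients), or simply to accept a larger constant and propagate it through Lemma~\ref{l:rpartite} and the final proof. The paper's injection avoids this bookkeeping entirely.
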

\begin{proof}
Let $G_0=G\setminus (A \cup B)$ and $G_1 = G[A \cup B]$. Let $a \in A$ and $b \in B$ be such that $G_A = G\setminus a$  and  $G_B = G\setminus b$. 
	
	For $S \subseteq V(H)$ and $G' \in \{G_A,G_B\}$, let $\II_{S}(H,G')$ denote the set of injective homomorphisms $\varphi$ from $H$ to $G'$ such that $\varphi^{-1}(A \cup B)=S$. Then 
	$$|\II_{S}(H,G')| = \ii(H[S],G'[A \cup B]) \cdot \ii(H \setminus S, G_0).$$
	Let
	\begin{align}
	\begin{split}
	\Delta(S)=& |\II_S(H,G_A)|-|\II_S(H,G_B)| \\
	= &\s{\ii(H[S],G_1 \setminus a) -\ii(H[S],G_1 \setminus b)}\cdot \ii(H \setminus S, G_0).  \label{e:defDelta}
	\end{split}
	\end{align}
	Then \begin{equation}\label{e:Delta0} \ii(H,G_A) - \ii(H,G_B) = \sum_{S \subseteq V(G)}\Delta(S), \end{equation}
	and so it suffices to lower bound $\sum_{S \subseteq V(G)}\Delta(S)$.
	
	\medskip
	 %moved this paragraph down
	  First let us lower bound the value $\ii(H \setminus S, G_0)$, which appears in \eqref{e:defDelta}.
	  
	  \begin{claim}
	 $\ii(H \setminus S, G_0) \geq \bigl(1 - 2\delta \v(H)^2\bigr) \cdot \v(G_0)^{\v(H)-|S|}.$ \label{e:iiSGo}
	  \end{claim}
	  \begin{claimproof}
	  We aim to use Corollary~\ref{l:iJG}, so we need to calculate the density of $G_0$.
	 Since $A$ is an independent set, $\v(G)-|A| \geq \deg(v) \geq (1-\delta)\v(G)$ for every $v \in A$, and thus i$|A| \leq \delta\v(G)$. Similarly, $|B| \leq \delta\v(G)$ and so
	 \begin{equation}
	 \v(G_0) \geq (1-2\delta)\v(G).
	 \label{e:sizeG0}
	 \end{equation}
	 In particular, $\v(G_0) \geq \v(G)/2$.
	 As every vertex of $G_0$ has at most $\delta\v(G) \leq 2\delta\v(G_0)$ non-neighbors in $G_0$, the graph $G_0$ is $2\delta$-dense.
	 Consequently, we obtain the claim by Corollary~\ref{l:iJG}.
	 \end{claimproof}

Before we estimate $\Delta(S)$, let us show one more technical claim.

\begin{claim}\label{clm:xyp}
Let $x,y$ be non-negative reals and $p$ be a non-negative integer.
Then $(1-x)(1-y)^p \geq 1-x-py$.
\end{claim}
\begin{claimproof}
Induction on $p$. If $p=0$, then the claim is trivial.
Thus suppose that $p \geq 1$ and $(1-x')(1-y')^{p-1} \geq 1-x'-(p-1)y'$ for all $x',y' \geq 0$. We have
\begin{align*}
(1-x)(1-y)^p = &(1-x)(1-y) \cdot (1-y)^{p-1} = (1-x-y+xy) \cdot (1-y)^{p-1} \\
\geq & (1-x-y)(1-y)^{p-1} \geq  1-x-py.
\end{align*}
where the first inequality follows since $xy \geq  0$ and the second inequality follows by the inductive assumption for $x'=x+y$ and $y'=y$.
\end{claimproof}

Now let us estimate $\Delta(S)$  when $|S| \leq 2$.

\begin{claim}\label{clm:DeltaS2}
$\sum_{S \subseteq V(H), |S| \leq 2} \Delta(S) \geq 2\e(H)(|A|-|B|)(1 - 2\delta\v(H)^3) \cdot \v(G)^{\v(H)-2}.$
\end{claim}
    \begin{claimproof}
        If $S$ is independent in $H$, then $\Delta(S)=0$. 
	If $|S|=2$ and the vertices of $S$ are adjacent, then a homomorphism from $H[S]$ to $G_1\setminus a$ or $G_1\setminus b$ maps $S$ to an edge, hence
	
\begin{align*}
\Delta(S)& =  \s{2\e(G_1 \setminus a)- 2\e(G_1 \setminus b)} \cdot \ii(H \setminus S, G_0) \\
& =  2\s{(|A|-1)|B| - 2(|B|-1)|A|} \cdot \ii(H \setminus S, G_0) \\
& =  2(|A|-|B|) \cdot \ii(H \setminus S, G_0).
\end{align*}	  
By Claim~\ref{e:iiSGo} and \eqref{e:sizeG0} we further get:
\begin{align*}
\Delta(S) & \geq   2(|A|-|B|)(1 - 2\delta \v(H)^2) \cdot \v(G_0)^{\v(H)-2} \\
& \geq   2(|A|-|B|)(1 - 2\delta \v(H)^2) \bigl((1-2\delta) \cdot \v(G)\bigr)^{\v(H)-2} \\
& =  2(|A|-|B|)(1 - 2\delta \v(H)^2) (1-2\delta)^{\v(H)-2} \cdot \v(G)^{\v(H)-2}.
\end{align*}	  
Now, applying Claim~\ref{clm:xyp} for $x=2\delta \v(H)^2$, $y=2\delta$, and $p = \v(H)-2$, we get
\begin{align*}	  
\Delta(S) & \geq 2(|A|-|B|)(1 - 2\delta \v(H)^2 - 2\delta(\v(H)-2)) \cdot \v(G)^{\v(H)-2}\\ 
	  & \geq  2(|A|-|B|)(1 - 2\delta \v(H)^3) \cdot \v(G))^{\v(H)-2}.
	  \end{align*}
Thus
\[
	\sum_{S \subseteq V(H), |S| \leq 2} \Delta(S) \geq 2\e(H)(|A|-|B|)(1 - 2\delta\v(H)^3) \cdot \v(G)^{\v(H)-2},
\]
as claimed.
	  \end{claimproof}
	  
 It remains to lower bound $\sum_{S \subseteq V(H), |S| \geq 3} \Delta(S)$.
 
\begin{claim}\label{clm:DeltaS3}
 $\sum_{S \subseteq V(H), |S| \geq 3} \Delta(S) \geq -2\e(H)(|A|-|B|)\cdot \delta \v(H)^3 \cdot \v(G)^{\v(H)-2}.$
\end{claim} 
 \begin{claimproof}
In order to prove the claim, we construct an injection from a large subset of $\II(H,G_B)$ into $\II(H,G_A)$ that maps homomorphisms in $\II_{S}(H,G_B)$ to homomorphisms in $\II_{S}(H,G_A)$, and lower bound the sum above by $-1$ times the number of remaining homomorphisms in $\bigcup_{|S| \geq 3} \II_S(H,G_B)$.  For example, if $\varphi$ is a homomorphism from $H$ to $G_B$, such that $a\notin \varphi(H)$, then $\varphi$ is also a homomorphism from $H$ to $G_A$, so we get a natural correspondence. If $a\in \varphi(H)$, then $\varphi$ is not a map from $H$ to $G_A$, hence we will define a correspondence in the following (for most such maps $\varphi$).
	  	 
First, choose any $A' \subseteq A$ such that $a \in A' $ and $|A'|=|B|$. Let $\iota: (A' \cup B) \to (A' \cup B)$ be an involution corresponding to a perfect matching between $A'$ and $B$ containing the edge $ab$, 
     that is $\iota(a)=b$, $\iota(A')=B$ and $\iota(\iota(v))=v$ for every $v \in A' \cup B$.
	  	 We extend $\iota$ to $V(G)-(A-A')$ by setting $\iota(v)=v$ for every $v \in V(G_0)$.
	  	 Define $\II^{*}(H,G_B) \subseteq \II(H,G_B) $  as the set of all injective homomorphisms $\varphi$ from $H$ to $G_B$ such that either \begin{itemize} \item $a \not \in \varphi(V(H))$, or
	  	 	\item $\varphi(V(H)) \cap (A-A') = \emptyset.$ 
	  	 	\end{itemize}
  	 	The map $f:\II^{*}(H,G_B) \to \II(H,G_A)$ is defined by setting  $f\varphi = \varphi$ if  $a \not \in \varphi(V(H))$,
  	 	and $f\varphi = \iota\varphi$ if $a \in \varphi(V(H))$. Note that $b \notin \varphi(V(H))$ hence $a \notin \iota \varphi(V(H))$, and in  the second case,  $\varphi(V(H)) \cap (A-A') = \emptyset$ and so $\iota$ is indeed well defined on $\varphi(V(H))$.
  	 	It is easy to see that $f$ has the properties we stated in the previous paragraph, i.e.,~$f$ is an injection and $f$ maps $\II^{*}(H,G_B) \cap \II_{S}(H,G_B)$ into $\II_{S}(H,G_A)$ for every $S \subseteq V(H)$. It follows that $$\Delta(S) = |\II_S(H,G_A)|-|\II_S(H,G_B)| \geq -| \II_S(H,G_B) - \II^{*}(H,G_B)|,$$ for every $S \subseteq V(H)$, and so 
  	 	\begin{equation}\label{e:Delta2}\sum_{S \subseteq V(H), |S| \geq 3} \Delta(S) \geq -\left|\s{\bigcup_{|S| \geq 3}  \II_S(H,G_B)} - \II^{*}(H,G_B)\right|. \end{equation}
  	 	Every homomorphism in $\bigcup_{|S| \geq 3}  \II_S(H,G_B) - \II^{*}(H,G_B)$ maps some vertex of $H$ to $a$, 
  	 	another vertex to $A-A'$ and at least one more vertex to $A \cup B$.
  	 	Thus\begin{align}\label{e:Delta3} &\left|\s{\bigcup_{|S| \geq 3}  \II_S(H,G_B)} - \II^{*}(H,G_B)\right|\notag \\ &\qquad \leq \v(H)\cdot (|A|-|B|)\v(H) \cdot (|A|+|B|)\v(H) \cdot \v(G)^{\v(H)-3} \notag\\ &\qquad \leq (|A|-|B|)\v(H)^3 \cdot 2\delta\v(G) \cdot \v(G)^{\v(H)-3}\notag\\ & \qquad \leq 2\e(H)(|A|-|B|)\cdot \delta \v(H)^3 \cdot \v(G)^{\v(H)-2}.\end{align}
Combining \eqref{e:Delta2} and \eqref{e:Delta3} completes the proof of the claim.
\end{claimproof}  	 	
  	 	
Now the statement of the lemma follows directly from equation \eqref{e:Delta0} and Claims~\ref{clm:DeltaS2} and \ref{clm:DeltaS3}.
\end{proof}

Next we prove that a strengthening of Theorem~\ref{thm:main2} holds if $G$ is $r$-partite, i.e.,~in this case we not only show that $\ii(H,G) \leq  \ii(H,T_r(n)),$ but  give an essentially optimal bound on $\ii(H,T_r(n)) - \ii(H,G)$ in terms of $\Ed - \e(G)$.

\begin{lemma}
\label{l:rpartite}
For every graph $H$ with at least one edge, every {$0 < \delta \leq 1/4$}, and every $r$-partite $\delta$-dense graph $G$ we have
   \begin{equation}\label{e:rpartite}
\ii(H,T_r(n)) - \ii(H,G) \geq 2\e(H)(1-3\delta\v(H)^3) \left(\Ed - \e(G)\right)n^{\v(H)-2}.
   \end{equation} 
\end{lemma}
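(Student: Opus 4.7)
The plan is to transform $G$ into $T_r(n)$ via two phases, tracking the change in $\ii(H,\cdot)$ at each step. Let $G^*$ denote the complete $r$-partite graph on the same vertex set as $G$ with the same partition. Phase~A adds edges to go from $G$ to $G^*$, and Phase~B performs vertex moves to go from $G^*$ to $T_r(n)$.

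For Phase~A, the key sub-claim is that for any $\delta$-dense $r$-partite graph $G'$ on $n$ vertices and any non-edge $uv$ of $G'$ between distinct parts,
\[
\ii(H, G' + uv) - \ii(H, G') \geq 2\e(H)(1 - \delta \v(H)^2)\cdot n^{\v(H) - 2}.
\]
This difference counts the injective homomorphisms $\varphi \in \II(H, G' + uv) \setminus \II(H, G')$, which are precisely those using the new edge $uv$; by injectivity, each such $\varphi$ is associated with a unique ordered pair $(x,y)$ where $xy \in E(H)$ and $(\varphi(x), \varphi(y)) = (u,v)$. For each of the $2\e(H)$ such ordered pairs, Lemma~\ref{l:lowerbound} applied with $X = \{x,y\}$, $k = \v(H) - 2$, and the partial map $(x,y) \mapsto (u,v)$ in the still-$\delta$-dense graph $G' + uv$ delivers at least $(1 - \delta\v(H)^2)n^{\v(H)-2}$ injective extensions, and the corresponding sets of $\varphi$ are pairwise disjoint. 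Chaining this bound over the $\e(G^*) - \e(G)$ edge additions gives
\[
\ii(H, G^*) - \ii(H, G) \geq 2\e(H)(1 - \delta\v(H)^2)(\e(G^*) - \e(G))\cdot n^{\v(H) - 2}.
\]

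For Phase~B, I iterate the following single-vertex move: while the current graph $G^{(k)}$ has two parts $V_a, V_b$ with $|V_a| \geq |V_b| + 2$, let $G^{(k+1)}$ be obtained from $G^{(k)}$ by relocating one vertex from $V_a$ to $V_b$. To bound each step, I augment $G^{(k)}$ by adding a fresh vertex $w$ to part $V_b$, producing an $r$-partite graph $G^+$ on $n+1$ vertices such that $G^+ \setminus w = G^{(k)}$ and $G^+ \setminus a$ is isomorphic to $G^{(k+1)}$ for any $a \in V_a$. Since the $\delta$-density of $G$ forces every part of $G$ to have size at most $\delta n$, and since rebalancing never increases the maximum part size, every part of $G^{(k)}$ has size at most $\delta n$; combined with $|V_b| + 1 \leq |V_a| - 1 \leq \delta n$, this ensures that $G^+$ is $\delta$-dense. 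Apply Lemma~\ref{l:rebalance} to $G^+$ with $A = V_a$ and $B = V_b \cup \{w\}$: a direct edge-count computation gives $|A| - |B| = |V_a| - |V_b| - 1 = \e(G^{(k+1)}) - \e(G^{(k)})$, and using $(n+1)^{\v(H)-2} \geq n^{\v(H)-2}$ yields
\[
\ii(H, G^{(k+1)}) - \ii(H, G^{(k)}) \geq 2\e(H)(\e(G^{(k+1)}) - \e(G^{(k)}))(1 - 3\delta\v(H)^3)\cdot n^{\v(H) - 2}.
\]
Telescoping over the rebalancing steps (which terminate at a graph isomorphic to $T_r(n)$) produces the analogous bound relating $\ii(H, T_r(n))$ and $\ii(H, G^*)$.

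Combining the Phase~A and Phase~B inequalities via $\e(T_r(n)) - \e(G) = (\e(T_r(n)) - \e(G^*)) + (\e(G^*) - \e(G))$ and the elementary bound $1 - \delta\v(H)^2 \geq 1 - 3\delta\v(H)^3$ (valid since $\v(H) \geq 1$) gives the claim. The principal technical obstacle is the density bookkeeping for the auxiliary graph $G^+$ in Phase~B: Lemma~\ref{l:rebalance} requires $G^+$ to be $\delta$-dense despite having $n+1$ vertices, and this is maintained only because the rebalancing process strictly chooses $|V_a| \geq |V_b| + 2$, so that adding $w$ to $V_b$ keeps the maximum part size of $G^+$ at most $\delta n$.
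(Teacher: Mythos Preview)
Your proof is correct and follows essentially the same approach as the paper: the paper argues by induction on $\e(T_r(n))-\e(G)$, with the two inductive cases (add one missing cross-edge via Lemma~\ref{l:lowerbound}; or, if $G$ is already complete $r$-partite, rebalance one vertex via Lemma~\ref{l:rebalance}) corresponding exactly to your Phase~A and Phase~B. Your version is in fact slightly more explicit than the paper's, since you spell out the auxiliary $(n+1)$-vertex graph $G^+$ needed to invoke Lemma~\ref{l:rebalance} and verify its $\delta$-density, whereas the paper's appeal to Lemma~\ref{l:rebalance} and to the induction hypothesis leaves these checks implicit.
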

\begin{proof}
We prove the lemma by induction on the difference $\Ed - \e(G)$.
By Tur\'an's theorem$~{\e(G) \leq \Ed}$ for every $K_{r+1}$-free $n$-vertex graph $G$ and if $\e(G)=\Ed$ then $G$ is isomorphic to $T_r(n)$, implying that the lemma holds when $\Ed - \e(G) \leq 0$.

For the induction step we assume $\Ed - \e(G) \geq 1$. Suppose first that $G$ is not a complete $r$-partite graph, i.e., there exists an $r$-partite graph $G'$ such that $G$ is obtained from $G'$ by deleting an edge between some pair of vertices $v_1,v_2 \in V(G')$. By Lemma~\ref{l:lowerbound} each map $\varphi'$ that  maps the ends of some edge of $H$ to $v_1$ and $v_2$ can be extended to at least $(1-\delta\v(H)^2)\v(G)^{\v(H)-2}$ homomorphisms in $\II(H,G')$, yielding a total of  $2\e(H)(1-\delta\v(H)^2)\v(G)^{\v(H)-2}$ homomorphisms  in $\II(H,G')-\II(H,G)$. Thus $$ \ii(H,G') -\ii(H,G) \geq 2\e(H)(1-\delta\v(H)^2)\v(G)^{\v(H)-2},$$
and   \begin{equation}\label{e:rpartite0} \ii(H,T_r(n)) - \ii(H,G') \geq 2\e(H)\left(\Ed - \e(G')\right)(1-3\delta\v(H)^3)\v(G)^{\v(H)-2},\end{equation}
by induction hypothesis, implying \eqref{e:rpartite}.

It remains to consider the case that $G$ is  complete $r$-partite. As $G$ is not a Tur\'an graph there exist parts $A,B'$ of the $r$-partition of $G$ such that $|A|\geq |B'|+2$. Let the graph $G'$ be obtained from $G$ by deleting some vertex of $a \in A$ and adding a new vertex $b$ such that $b$ is adjacent to every vertex of $G'$ except for vertices in $B'$, i.e., $G'$ has an $r$-partition with the parts $A$ and $B'$ replaced by parts $A -\{a\}$ and $B=B' \cup \{b\}$. Note that $\e(G')=\e(G)+|A|-|B|$.
By Lemma~\ref{l:rebalance} we have $$\ii(H,G') \geq \ii(H,G) + 2\e(H)(|A|-|B|)(1 - 3\delta\v(H)^3) \v(G)^{\v(H)-2},$$
Combining this inequality with \eqref{e:rpartite0}, which once again holds by the induction hypothesis, we obtain \eqref{e:rpartite}, as desired.
\end{proof}

\section{Proof of Theorem~\ref{thm:main2}}\label{sec:proof}

In this section we complete the proof of Theorem~\ref{thm:main2}. This result follows from

Theorem~\ref{thm:Furedi} and Lemma~\ref{l:rpartite}. The remaining technical difficulty is to show that the extremal graph $G$ must be $\delta$-dense for sufficiently small $\delta$.

\begin{proof}[Proof of Theorem~\ref{thm:main2}]
Theorem~\ref{thm:main2} trivially holds if $\e(H)=0$, so we assume $\e(H)\geq1$. Similarly, the theorem holds  if $n \leq r$ as  $T_r(n)=K_n$ for such $n$ and so we assume $n \geq r+1$. 

Suppose that $G$ is a $K_{r+1}$-free $n$-vertex graph that maximizes $\ii(H,G)$ among all such graphs. 
As every vertex of $T_r(n)$ has degree at least $n - n/r - 1 \geq (1-2/r)n$, the graph $T_r(n)$ is $(2/r)$-dense. Thus by Corollary \ref{l:iJG} we have 
\begin{align*}
    \ii(H,G)\geq \ii(H,T_r(n)) \geq  \left(1-\frac{2}{r}\v(H)^2\right)n^{\v(H)}.
\end{align*}
For  $v \in V(G)$ let $\ii(v)$ denote, for brevity, the number of homomorphisms in $\II(H,G)$ that contain $v$ in its image.
By averaging there exists $v_0 \in V(G)$ such that \begin{equation}\label{e:v_0} \ii(v_0) \geq \frac{\v(H)}{n}\ii(H,G) \geq \v(H)\left(1-\frac{2}{r}\v(H)^2\right)n^{\v(H)-1}.\end{equation} 
We now loosely upper bound $\ii(v)$  for arbitrary $v \in V(G)$ in terms of $\deg(v)$, as follows. There are  $\v(H)n^{\v(H)-1}$ (not necessarily injective) maps $\varphi: V(H) \to V(G)$ that contain $v$ in its image. On the other hand, if the map $\varphi: V(H) \to V(G)$ is such that  for some fixed edge $uu' \in E(H)$ we have $\varphi(u)=v$ and $\varphi(u')$ is not a neighbor of $v$, then $\varphi$ is not a homomorphism from $H$ to $G$. There are $(n -\deg(v))n^{\v(H)-2}$ such (not necessarily injective) maps, implying

\begin{equation}\label{e:u}\ii(v) \leq \v(H)n^{\v(H)-1} - (n-\deg(v))n^{\v(H)-2}.\end{equation} 
Construct a graph $G'$ from $G \setminus v$ by adding a copy of $v_0$, i.e.,~adding a vertex $v_1$ such that $v_1w \in E(G')$ if and only if $v_0w \in E(G)$ for $w \in V(G) -\{v\}$. Note that $G'$ is $K_{r+1}$-free as no clique contains $v_0$ and $v_1$ and replacing $v_1$ with $v_0$ in any clique of $G'$ gives a clique in $G$ of the same size. As at most $\v(H)^2n^{\v(H)-2}$ homomorphisms in $\II(H,G)$  contain both $v_0$ and $v$ in their image, we have
$$\ii(H,G) \geq \ii(H,G')\geq \ii(H,G) - \ii(v)+(\ii(v_0)-\v(H)^2n^{\v(H)-2}),$$
and so $\ii(v) \geq \ii(v_0) -\v(H)^2n^{\v(H)-2}$.
Thus by \eqref{e:v_0} and \eqref{e:u} we have
$$\v(H)n^{\v(H)-1} - (n-\deg(v))n^{\v(H)-2} \geq \v(H)\left(1-\frac{2}{r}\v(H)^2\right)n^{\v(H)-1}-\v(H)^2n^{\v(H)-2},$$
and so $$ \deg(v) \geq n -\frac{2}{r}\v(H)^3 n-\v(H)^2 \geq \s{1-\frac{3}{r}\v(H)^3}n \geq \s{1-\frac{1}{100\v(H)^6}}n$$
for every $v \in V(G)$, where we use $r \geq 300 \v(H)^9$. It follows that $G$ is $\frac{1}{100\v(H)^6}$-dense and $\e(G) \geq (1 - \frac{1}{100\v(H)^6})n^2/2$.

Theorem~\ref{thm:Furedi} gives an $r$-partite subgraph $G_0$ of $G$ such that $\e(G_0) \geq 2\e(G)-\e(T_r(n)).$ Note that every injective homomorphism $\varphi \in  \II(H,G) -\II(H,G_0)$ must map the ends of some edge $e \in E(H)$ to the ends of some edge $f \in G\setminus G_0$. There are $\e(H)$ choices of $e$, $(\e(G)-\e(G_0))$ choices of $f$, two choices of bijection between their ends, and at most $n^{\v(H)-2}$ choices of values of $\varphi$ on the remaining vertices of $H$. Putting this together yields 

\begin{equation}\label{e:upper}\ii(H,G_0) \geq  \ii(H,G) - 2\e(H)(\e(G)-\e(G_0))n^{\v(H)-2}.\end{equation}

We assume without loss of generality that $G_0$ is a maximal $r$-partite subgraph of $G$.
We have
\[ \e(G_0) \geq \e(G) - (\Ed-e(G)) \geq \e(G) -(n^2/2-\e(G)) \geq  \s{1 -  \frac{1}{50\v(H)^6}}\frac{n^2}{2}.\]
Let $A_1,A_2,\ldots,A_r$ be some partition of $V(G_0)$ into $r$ independent sets. As $2\e(G_0) \leq n^2 - \sum_{i=1}^r|A_i|^2$ we have $|A_i|^2  \leq \frac{1}{50\v(H)^6} \cdot n^2$ for every $i$, and so  $\max_{1 \leq i \leq r}|A_i| \leq \frac{n}{7\v(H)^3}$. By maximality of $G_0$, every edge in $E(G)-E(G_0)$ has both ends in some $A_i$, implying $$\deg_{G_0}(v) \geq \deg_G(v)-\max_{1 \leq i \leq r}|A_i| \geq \s{1- \frac{1}{100\v(H)^6} - \frac{1}{7\v(H)^3}}n \geq \s{1- \frac{1}{6\v(H)^3}}n$$
for every $v \in V(G)$. Thus $G_0$ is $\frac{1}{6\v(H)^3}$-dense and $\frac{1}{6\v(H)^3} < 1/4$. By 	Lemma~\ref{l:rpartite} we have
\begin{align*}
\ii(H,T_r(n)) &\geq  \ii(H,G_0) + 2\e(H)\s{1-3\s{\frac{1}{6\v(H)^3}} \v(H)^3} \left(\Ed - \e(G_0)\right)n^{\v(H)-2} \\
& \stackrel{\eqref{e:upper}}{\geq} \ii(H,G) - 2\e(H)(\e(G)-\e(G_0))n^{\v(H)-2}+\e(H)\left(\Ed - \e(G_0)\right)n^{\v(H)-2}\\
&= \ii(H,G) + \e(H)(\e(G_0)+\Ed-2\e(G))n^{\v(H)-2} \\ &\geq \ii(H,G),
\end{align*}
as desired.
\end{proof}

\section{Further Questions}

We conclude with a few questions that our current approach does not resolve. A classic strengthening of Tur\'an's Theorem due to Er\H{o}s and Simonovits~\cite{E67, S66} is the Stability Theorem: if $G$ is a $K_{r+1}$-free graph on $\ex(n, K_2, K_{r+1})-o(n^2)$ edges, then $G$ has edit distance $o(n^2)$ from the Tur\'an graph, which is to say $G$ can be transformed into the Tur\'an graph by adding and subtracting $o(n^2)$ edges. It is natural to ask if the generalized Tur\'an problem also exhibits stability.

We say that a graph $H$ is $F$-Tur\'an-stable if any $K_{r+1}$-free graph $G$ on $n$ vertices with $\mathcal{N}(H,T_r(n)) - o(n^{\v(H)})$ copies of $H$ has edit distance $o(n^2)$ from the Tur\'an graph.

\begin{question}\label{q:stability}
Fix any graph $H$ and let $r$ be large enough that $H$ is $K_{r+1}$-Tur\'an-good. Does it follow that $H$ is $K_{r+1}$-Tur\'an-stable?
\end{question}

Stability for generalized Tur\'an problems has been considered. Ma and Qiu~\cite{MQ20} proved that $K_r$ is $F$-Tur\'an-stable for $\chi(F) > r$. Gerbner~\cite{G22c} showed that if $\chi(F) = \chi(H)+1 = r+1$ and $H$ is both $K_{r+1}$-Tur\'an-good and $K_{r+1}$-Tur\'an-stable, then $H$ is $F$-Tur\'an-good. Our method shows such a graph $G$ has edit distance at most $O(n^2/r)$ from $T_r(n)$ but our techniques are not sufficient to show a sub-quadratic bound.

Another natural question is whether the generalized Tur\'an problem is monotonic in the following sense:

\begin{question}
Fix a graph $H$ and suppose $H$ is $K_r$-Tur\'an-good. Does it follow that $H$ is also $K_{r+1}$-Tur\'an-good? In other words, if $T_{r-1}(n)$ maximizes $\mathcal{N}(H,G)$ among $K_r$-free graphs, does it follow that $T_r(n)$ maximizes $\mathcal{N}(H,G)$ among $K_{r+1}$-free graphs?
\end{question}

One approach to proving monotonicity might be to start with a graph $G$ that is $K_{r+1}$-free and then remove edges to get $G'$ that is $K_r$-free. If this can be done in such a way that $\e(G)-\e(G') < \e(T_r(n)) - \e(T_{r-1}(n))$, then the methods described in this paper suggest
\[ \mathcal{N}(H,G) - \mathcal{N}(H,G') \le \mathcal{N}(H,T_r(n)) - \mathcal{N}(H,T_{r-1}(n)) \]
for sufficiently large $r$, and thus $\mathcal{N}(H,G') \le \mathcal{N}(H,T_{r-1}(n))$ is sufficient to demonstrate monotonicity.

Any $K_{r+1}$-free graph can be made $K_r$-free by removing at most $\frac{n^2}{r^2}$ edges as demonstrated by the following argument: If $S \subseteq E(G)$ is a minimal set of edges such that $G-S$ is $K_r$-free, then for each $e \in S$ there is an $r$-clique intersecting $S$ only at $e$; otherwise, $S-e$ is a smaller set intersecting every $K_r$. Thus, using Tur\'an's Theorem,
\[ \binom{r}{2}|S| \le \e(G) \le \binom{r}{2} \left(\frac{n}{r}\right)^2 \implies |S| \le \frac{n^2}{r^2}. \]
Unfortunately,
\[ \e(T_r(n)) - \e(T_{r-1}(n)) \approx \frac{n^2}{2r(r-1)} \]
is slightly smaller. This suggests a more delicate approach may be necessary where edges are added to $G$ in addition to being removed. Tracking the change in $\mathcal{N}(H,G)$ in such a process is beyond the scope of this paper.

\subsubsection*{Acknowledgements.} This research was partially completed at the \emph{Cross-community collaborations in combinatorics}  workshop (22w5107)  at the Banff International Research Station, 29 May-3 June 2022. We thank the organizers and other participants. The authors also thank D\'aniel Gerbner for his comments on an earlier draft.

\bibliographystyle{abbrv}
\bibliography{biblio}
\end{document}